\newtheorem{thm}{Theorem}
\newtheorem{cor}[thm]{Corollary}
\newtheorem{prop}[thm]{Proposition}
\newtheorem{lem}[thm]{Lemma}
\theoremstyle{remark}
\theoremstyle{definition}
\renewcommand{\bar}{\overline}
\newcommand{\codim}{{\rm codim}\,}
\newcommand{\Z}{{\mathbb{Z}}}
\newcommand{\N}{{\mathbb{N}}}
\newcommand{\Span}{\mathrm{Span}}
\newcommand{\Spec}{\mathrm{Spec}\;}
\newcommand{\Alt}{{\raise 2pt\hbox{$\scriptstyle\bigwedge$}}}
\newcommand{\e}{\epsilon}
\newcommand{\ba}{\mathbf{a}}
\newcommand{\bb}{\mathbf{b}}
\newcommand{\br}{\mathbf{r}}
\newcommand{\bv}{\mathbf{v}}
\newcommand{\bF}{\mathbf{F}}
\newcommand{\fI}{\mathfrak{I}}
\begin{document}
\title{Almost PI algebras are PI}

\author{Michael Larsen}
\email{mjlarsen@indiana.edu}
\address{Department of Mathematics\\
    Indiana University \\
    Bloomington, IN 47405\\
    U.S.A.}

\author{Aner Shalev}
\email{shalev@math.huji.ac.il}
\address{Einstein Institute of Mathematics\\
    Hebrew University \\
    Givat Ram, Jerusalem 91904\\
    Israel}

    \subjclass[2010]{Primary 16R99}

\thanks{ML was partially supported by NSF grant DMS-1702152.
AS was partially supported by ISF grant 686/17 and the Vinik Chair of mathematics which he holds.
Both authors were partially supported by BSF grant 2016072. AS is grateful to Efim Zelmanov for
inspiring discussions.}

\begin{abstract}
We define the notion of an almost polynomial identity of an associative algebra $R$,
and show that its existence implies the existence of an actual polynomial identity of $R$.
A similar result is also obtained for Lie algebras and Jordan algebras.
We also prove related quantitative results for simple and semisimple algebras.
\end{abstract}

\maketitle
By a well known theorem of Peter Neumann \cite{N}, for all $\epsilon > 0$ there exists $N>0$ such that if $G$ is a finite group with
at least $\epsilon|G|^2$ pairs $(x,y) \in G^2$ satisfying $[x,y]=1$, then $[x^N,y^N]^N =1$ for all $x,y\in G$.
We can express this by saying that if $[x,y]$ is an $\epsilon$-probabilistic identity for $G$, then $[x^N,y^N]^N$
is an identity for $G$.  See also Mann \cite{M} for a similar result on finite groups in which $x^2$ is an $\epsilon$-probabilistic identity.
It is an open question whether every probabilistic identity in finite and residually finite groups
implies an actual identity; see \cite{LS2} and \cite{Sh} for further discussion of this question.

In this paper, we consider the analogous problem for associative algebras (as well as Lie algebras).  Here, the possible identities are
polynomials in a non-commuting set of variables (or Lie polynomials) rather than elements of a free group.
We introduce the notion of an \emph{almost identity} of an algebra as an analogue of a probabilistic identity and show that algebras
with an almost identity satisfy an actual identity. While we focus here on infinite-dimensional algebras
we also prove related quantitative results for finite dimensional simple algebras.

Throughout this paper let $k$ denote an algebraically closed field of arbitrary characteristic.
Let $R$ be a (possibly infinite-dimensional) associative algebra over  $k$.
Let $n$ be a positive integer and $V = R^n$, regarded as $k$-vector space.
For each non-commutative polynomial $Q\in k\langle x_1,\ldots,x_n\rangle$, evaluation of $Q$ defines a map $e_Q\colon R^ n\to R$.
For each linear functional $\alpha\in R^{*}$, $\alpha\circ e_Q$ defines an element of the ring $A$ of polynomial functions on $V$.
Let $R_Q := e_Q^{-1}(0)$ denote the set of $\br = (r_1,\ldots,r_n)\in V$ such that $e_Q(\br) = 0$.

We now introduce more notation and terminology. In particular we define the notions of codimension and almost identity, which play
a key role in this paper.
Let $A$ be the ring of $k$-valued polynomial functions on a $k$-vector space $V$.  If $\fI$ is an ideal in $A$, we denote by $V(\fI)\subset V$ the solution set of the
system of equations on $V$ given by the elements of $\fI$.  An \emph{algebraic set} in $V$ is a set of this form.
Note that if $\dim V = \infty$, this may be a proper subset of the set of $k$-points of $\Spec A/\fI$.
We say the algebraic set $V(\fI)$ has \emph{finite codimension}
if it contains a translate of a vector subspace of $V$ of finite codimension.  In particular, this is the case whenever $\fI$ is finitely generated or (more generally)
when $\fI$ is contained in a finitely generated (non-unit) ideal.

We say $V(\fI)$ has codimension $\le c$ if there exists a direct sum decomposition $V = V_1\oplus V_2$, where $V_2$ is finite dimensional,
and $V$ contains $V_1\times X_2$ for $X_2\subset V_2$ an algebraic set of codimension $c$.
This is, of course, the case whenever $V(\fI)$ contains a translate of a subspace of codimension $c$.
The codimension of an algebraic set of finite codimension is the smallest integer $c$
for which the set has codimension $\le c$.
\medskip

\noindent
\emph{Example.}
Let $V$ denote the space of infinite sequences, $A$ the algebra of polynomial $k$-valued functions on $V$, $x_i\in A$  the
function sending a sequence to the value of its $i$th term, and
$$\fI = (x_1(x_1-1), x_1x_2(x_2-1),\ldots,x_1\cdots x_n(x_n-1),\ldots).$$
Then
$$V(\fI) = \{(1,1,1,\ldots)\}\cup\bigcup_{i=0}^\infty (\underbrace{1,\ldots,1}_i,0,\ast,\ast,\ldots).$$
In particular, $V(\fI)$ has codimension $1$, though it contains components of all positive integer codimensions.
\medskip

We say that \emph{$Q\neq 0$ is a $c$-almost identity of $R$} if $e_Q^{-1}(0)$ has codimension $\le c$ in $V$.
We say that $R$ \emph{satisfies an almost identity} if it satisfies a $c$-almost identity for some $c$; of course, all finite dimensional algebras have this property.
Our main result is the following:

\begin{thm}
\label{main}
For all positive integers $c$, $d$ and $n$  there exist an integer $m$ and a non-zero non-commutative polynomial $P\in k\langle y_1,\ldots,y_m\rangle$ with the following property. If $Q\in k\langle x_1,\ldots,x_n\rangle$ is a non-commutative polynomial of degree $d$
in $n$ variables, and $R$ is any associative $k$-algebra such that $Q$ is a $c$-almost identity of $R$, then
$P(r_1,\ldots,r_m)=0$ for all $r_1, \ldots , r_m \in R$.
\end{thm}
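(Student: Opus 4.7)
The plan is to first show that $Q$ is an honest polynomial identity on a finite-codimension $k$-subspace of $R^n$, and then, by a Cramer-style linear-dependence trick, lift this to a polynomial identity of $R$ itself.

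\emph{Stage 1 (honest identity on $V_1$).} Fix a decomposition $R^n = V_1 \oplus V_2$ witnessing $c$-almost identity, so that $V_2$ is finite-dimensional of dimension $D$, $X_2 \subset V_2$ is algebraic of codimension $c$, and $V_1 + X_2 \subset e_Q^{-1}(0)$. Since $X_2$ is nonempty, pick $y_0 \in X_2$; then $Q(v + y_0) = 0$ for every $v \in V_1$. As $V_1$ is a $k$-linear subspace, we may substitute $v \mapsto tv$ for $t \in k$, obtaining $Q(tv + y_0) = 0$ for all $t \in k$. The left-hand side is a polynomial in $t$ of degree $d$ with coefficients in $R$, and since $k$ is algebraically closed (hence infinite) every coefficient of $t$ vanishes. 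The $t^d$-coefficient equals $Q(v)$, so $Q$ vanishes identically on the $k$-subspace $V_1 \subset R^n$.

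\emph{Stage 2 (lift to $R$).} Let $\psi_1, \dots, \psi_D \in (R^n)^*$ be linear functionals whose common kernel is $V_1$. For any $D+1$ elements $\br_0, \dots, \br_D \in R^n$, the vectors $(\psi_i(\br_j))_i \in k^D$ are linearly dependent, and Cramer's rule provides explicit scalar coefficients $a_j(\br) := (-1)^j \det[\psi_i(\br_{j'})]_{i,\,j'\ne j}$ with $\sum_j a_j(\br)\br_j \in V_1$. By Stage 1, the expression $P(\br_0, \dots, \br_D) := Q\bigl(\sum_j a_j(\br)\br_j\bigr)$ then vanishes identically on $R^{n(D+1)}$. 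Viewing $P$ as a polynomial in the $n(D+1)$ components of $\br_0, \dots, \br_D$, extracting an appropriate multi-homogeneous component produces a nonzero polynomial identity of $R$ of degree at most $d(D+1)$.

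\emph{Main obstacle.} Both $m = n(D+1)$ and the polynomial $P$ above depend on $D$, which a priori is only bounded below by $c$. To obtain $m$ and $P$ depending only on $c$, $d$, $n$ one must bound $D$ uniformly. I expect this to be achieved by first replacing $X_2$ by its maximal version $X_2^{\max} := \{y \in V_2 : V_1 + y \subset e_Q^{-1}(0)\}$, which is algebraic and cut out by polynomials of degree at most $d$ coming from components of $e_Q$, and then iteratively absorbing the linear stabilizer of $X_2^{\max}$ into $V_1$ to reach a "rigid" decomposition. The bound on $\dim V_2$ in the rigid case, using that $X_2^{\max}$ has complexity controlled by $c$, $d$, and $n$, is where the technical heart of the argument lies.
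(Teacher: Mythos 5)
Your Stage~2 does not prove the theorem, and the obstruction is more basic than the uniformity issue you flag at the end. The scalars $a_j(\mathbf{r})$ are determinants of values $\psi_i(\mathbf{r}_{j'})$ of arbitrary $k$-linear functionals on $R^n$; such functionals are not algebra operations, so the expression $Q\bigl(\sum_j a_j(\mathbf{r})\mathbf{r}_j\bigr)$ is merely a $k$-polynomial map $R^{n(D+1)}\to R$ whose coefficients vary with the point of substitution. It is not the evaluation of any element of the free algebra $k\langle y_1,\ldots,y_m\rangle$, and taking multi-homogeneous components does not change this (nor is nonvanishing of the extracted component as a free-algebra element ever verified). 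So Stage~2 produces no polynomial identity at all, not even one depending on $R$. A smaller slip in Stage~1: the $t^d$-coefficient of $Q(tv+y_0)$ is $Q_d(v)$, the top homogeneous part, not $Q(v)$; this is harmless, but note that what you obtain is a nonzero homogeneous polynomial vanishing on $V_1$, whose codimension is $D=\dim V_2$, not $c$ --- so even the corrected Stage~1 already trades the parameter $c$ for the uncontrolled $D$.

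The ``main obstacle'' you hope to overcome is in fact insurmountable in the form you propose: $D$ cannot be bounded in terms of $c,d,n$. For instance $X_2$ can be a hypersurface of large degree in a large $V_2$ containing no affine lines, so it has codimension $1$, its linear stabilizer is trivial, and no ``rigid'' decomposition with bounded $\dim V_2$ exists. The paper avoids this entirely: the identity $P$ is not built from $Q$ or from $V_1$ at all. Proposition~\ref{indep} constructs, by a purely combinatorial counting/rewriting argument (pattern-avoiding sequences and the score function), a single nonzero $P\in k\langle y_1,\ldots,y_m\rangle$ with $m=n+\max(n,c)$ such that $P(r_1,\ldots,r_m)\neq 0$ forces all monomials of degree $\le d$ in the $r_i$ to be linearly independent. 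If $P$ were not an identity of $R$, Lemma~\ref{density} lets one choose $r_1,\ldots,r_m$ with $P(r_1,\ldots,r_m)\neq 0$ so that the affine image of $M_{n\times m}(k)$ under right multiplication by $(r_1\,\cdots\,r_m)^t$ meets $V_1\times X_2$; by Lemma~\ref{subspace} every rank-$n$ matrix maps outside $V_1\times X_2$, so the intersection lies in the rank-deficient locus, which by Lemma~\ref{dim} has codimension $>c$, contradicting that a nonempty intersection with a codimension-$\le c$ set has codimension $\le c$. That detour through a universal ``independence-certifying'' polynomial is the missing idea, and it is what makes $P$ depend only on $c$, $d$, $n$.
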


In particular, every algebra satisfying an almost identity is PI.

In the case of matrix algebras $R = M_s(k)$, if $P$ is an identity of $M_s(k)$ then $s \le \deg P/2$,
and this is tight by the Amitsur-Levitski theorem \cite{AL}.  Thus,
Theorem~\ref{main} implies
\begin{cor}
If $Q\in k\langle x_1,\ldots,x_n\rangle$ is non-zero, then the codimension of
$$M_{s,Q} := \{\br\in M_s^n\mid Q(\br)=0\}$$
in $M_s^n$ (regarded as an affine space of dimension $ns^2$) grows without bound as $s\to \infty$.
\end{cor}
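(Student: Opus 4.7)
The plan is to prove the corollary by contradiction, reducing it via Theorem~\ref{main} to the Amitsur--Levitski theorem. Suppose the conclusion fails: then for some fixed nonzero $Q \in k\langle x_1,\ldots,x_n\rangle$ of degree $d$, there exist a constant $c$ and an infinite set $S \subset \N$ such that $\codim M_{s,Q} \le c$ in $M_s^n$ for every $s \in S$.

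The first step is to translate this codimension bound into the language of almost identities. Since $M_s^n$ is finite dimensional, the trivial decomposition $V_1 = 0$, $V_2 = M_s^n$, $X_2 = M_{s,Q}$ shows that the paper's definition of codimension $\le c$ reduces to an ordinary codimension bound on $M_{s,Q}$ as an algebraic set of $M_s^n$. In particular, $Q$ (which is nonzero by hypothesis) is a $c$-almost identity of $M_s(k)$ for every $s \in S$.

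Next, I would invoke Theorem~\ref{main} applied to the triple $(c, d, n)$. This produces an integer $m$ and a nonzero polynomial $P \in k\langle y_1, \ldots, y_m\rangle$ that is a genuine polynomial identity of every $k$-algebra admitting a polynomial of degree $d$ in $n$ variables as a $c$-almost identity. Applied to $R = M_s(k)$ for each $s \in S$ (with the single fixed $Q$ above as the almost identity), this yields that the same $P$ is an honest polynomial identity of $M_s(k)$ for all $s \in S$.

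To finish, I would invoke the Amitsur--Levitski theorem \cite{AL}, which forces $\deg P \ge 2s$ whenever $P$ is a nontrivial identity of $M_s(k)$. This bounds $s \le (\deg P)/2$ uniformly on $S$, contradicting the infinitude of $S$ and completing the proof. The only nontrivial ingredient is Theorem~\ref{main} itself; given it, each of the steps above is essentially formal, so the main obstacle has already been disposed of upstream, and the corollary amounts to stringing together the main theorem with Amitsur--Levitski.
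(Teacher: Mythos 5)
Your argument is correct and is essentially the paper's own derivation: the paper also obtains the corollary by combining Theorem~\ref{main} (applied with $d=\deg Q$, yielding a single identity $P$ of bounded degree for any $M_s(k)$ on which $Q$ is a $c$-almost identity) with the Amitsur--Levitski bound $s\le \deg P/2$. Your contradiction framing and the observation that in the finite-dimensional case the paper's notion of codimension reduces to the usual one are just explicit spellings-out of the same one-line proof.
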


In fact, something much stronger is true.  The second theorem of this paper is the following:

\begin{thm}
\label{matrix}
For each positive integer $d$
there exist real numbers $a>0$ and $b$ such that
if $Q\in k\langle x_1,\ldots,x_n\rangle$ is a non-commutative polynomial of degree $d \ge 0$,
then the codimension of
$M_{s,Q}$ in $M_s^n$ is at least $as^2+b$ for all positive integers $s$.
\end{thm}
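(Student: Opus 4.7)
The plan is to bound $\dim M_{s,Q}$ from above by a tangent/differential analysis of the evaluation map $e_Q : M_s^n \to M_s$, combined with an explicit construction of a test tuple on which $Q$ does not vanish.

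\textbf{Step 1 (reduction to multilinear homogeneous $Q$).} Write $Q = Q_0 + Q_1 + \cdots + Q_d$ with $Q_d \neq 0$. A Vandermonde/scaling argument applied to $Q(\lambda \br)$ should relate $\codim M_{s,Q}$ to $\codim M_{s,Q_d}$ up to an additive constant depending only on $d$. The standard polarization then turns the nonzero homogeneous $Q_d$ into a nonzero multilinear polynomial of degree $d$ with a controlled change in codimension. So it suffices to treat $Q$ multilinear of degree $d$ in $d$ variables.

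\textbf{Step 2 (test tuple).} Using matrix units, I would exhibit an explicit tuple $\ba \in M_{d+1}(k)^d$ with $Q(\ba) \neq 0$, in the spirit of Amitsur--Levitski: if $c_\sigma \neq 0$ is the coefficient of the word $x_{\sigma(1)} \cdots x_{\sigma(d)}$, choose $a_i$ to be a single matrix unit chosen so that in the expansion of $Q(\ba)$ only the $\sigma$-term contributes a nonzero entry at position $(1,d+1)$.

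\textbf{Step 3 (differential analysis).} For each irreducible component $C \subseteq M_{s,Q}$, at a generic (hence smooth on $C$) point $\br \in C$, one has $\codim C = \rk(de_Q(\br))$. The differential has the explicit form
\[
de_Q(\br)(\delta_1, \ldots, \delta_n) = \sum_i \sum_{(A,B)} c_{A,B,i}\, A\, \delta_i\, B,
\]
where for each $i$ the pairs $(A,B)$ run over prefix/suffix partial products of the $r_j$'s coming from the monomials of $Q$ surrounding $x_i$. Since $\dim(A M_s B) = \rk(A) \cdot \rk(B)$, the rank of $de_Q(\br)$ is bounded below by the maximum of $\rk(A)\rk(B)$ over the appearing pairs, which in turn bounds $\codim C$ from below.

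\textbf{Step 4 (generic-rank lemma and main obstacle).} The heart of the proof is to show that for every component $C$ of $M_{s,Q}$, the generic $\br \in C$ admits a pair of partial products with rank product at least $a s^2 + b$. The strategy is a dimension comparison: the locus in $M_s^n$ where every coordinate $r_i$ has rank $\le cs$ has dimension $n(2c-c^2)s^2$, so choosing $c = 1 - \sqrt{a/n}$ makes this $(n-a)s^2$, and a component of $M_{s,Q}$ violating the desired bound cannot lie in it; its generic point must then have some $r_i$ of rank exceeding $cs$. For monomials of length $\ge 2$ this already yields a partial product pair $(I, B)$ with $\rk(B) \ge cs$ and hence $\rk(de_Q(\br)) \ge cs^2$, at least on components where a single-variable partial product is forced. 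The main technical difficulty is handling components on which only longer partial products $r_{j_1}\cdots r_{j_l}$ control the rank: one must show that the locus where all such products have low rank has dimension bounded so as to contradict lying in an irreducible component of $M_{s,Q}$ of the requisite size. Combining this stratification-by-rank argument with the $\GL_s$-equivariance of $M_{s,Q}$ under simultaneous conjugation and the test tuple of Step 2, which exhibits an explicit non-vanishing locus, should yield the uniform codimension bound $\codim M_{s,Q} \ge a s^2 + b$ with $a, b$ depending only on $d$.
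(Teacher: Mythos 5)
Your Steps 3--4 rest on the inequality that $\rk(de_Q(\br))$ is at least the maximum of $\rk(A)\rk(B)$ over the prefix/suffix pairs $(A,B)$ appearing in the expansion of the differential, and this inequality is false: $de_Q(\br)$ is a \emph{sum} of the maps $\delta_i\mapsto A\delta_i B$, and distinct terms can cancel. Already for $Q=x_1x_2-x_2x_1$, at a generic point of the commuting variety both $r_1,r_2$ are invertible, so your bound would give $\rk(de_Q(\br))\ge s^2$ and hence codimension $\ge s^2$, whereas the commuting variety has codimension $s^2-s$; at special points of $M_{s,Q}$ such as $r_1=r_2$ the differential even vanishes identically. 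So there is no mechanism by which one high-rank partial product forces high codimension, and the issue you flag as ``the main technical difficulty'' (components where all the terms interact) is not a technicality to be handled by a rank stratification of the coordinates $r_i$ -- it is the whole content of the theorem. (A smaller point: at a generic smooth point of a component $C$ one only has $\codim C\ge \rk(de_Q(\br))$, not equality, since $e_Q^{-1}(0)$ may be non-reduced or have excess components; that is the direction you need, so it is harmless, but the equality as stated is wrong.) Step 1 is also unjustified: vanishing loci are not scale-invariant, so the Vandermonde argument in $\lambda$ only yields $\bigcap_j M_{s,Q_j}\subseteq M_{s,Q}$, which bounds $\codim M_{s,Q}$ from \emph{above} by the sum of the codimensions of the homogeneous pieces, not from below; likewise polarization transfers identities, not codimensions of zero sets, and an ``additive constant'' loss is not what any such argument would give. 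Step 2 only shows the codimension is positive and plays no quantitative role.

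For comparison, the paper's proof avoids differentials and cancellation entirely. It introduces auxiliary vectors $v_1,\dots,v_t\in k^s$ and observes that $Q(\br)=0$ forces, for each $i$, some vector $\br_I v_i$ with $\deg I\le d$ to be a linear combination of the earlier vectors in a fixed enumeration of the $Nt$ vectors $\br_J v_{i'}$ (where $N=|M^{\le d}|$), because $Q(\br)v_i=0$ is a nontrivial dependence among the $\br_J v_i$. Conditioning on the earlier vectors, each such dependence is a linear condition of codimension $>s-Nt$ on a single matrix coordinate $r_{p_i}$ (or on $v_i$), and the $t$ conditions are independent; this gives $\dim\Sigma_{n,d,t}\le ns^2+Nt^2$ for the locus of such pairs $(\br,\bv)$, hence $\dim M_{s,Q}\le ns^2+Nt^2-st$, and optimizing $t\approx s/2N$ yields $\codim M_{s,Q}\ge s^2/4N-s$. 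If you want to pursue a tangent-space approach you would need a genuinely new idea to rule out cancellation in $de_Q$ at generic points of arbitrary components, and none is proposed here, so the proposal as it stands does not prove the theorem.
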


Let $Q, d, n$ be as above and let $R$ be a finite dimensional $k$-algebra.
We define the (normalized) Hausdorff dimension of the algebraic subset $R_Q \subseteq R^n$ by $\dim R_Q / \dim R^n$.
Theorem \ref{matrix} shows that, when $R$ is simple of sufficiently large dimension given $d$, the Hausdorff dimension
of $R_Q$ is at most $1-\e$ for some $\e > 0$ depending only on $d$.

Before proving Theorems \ref{main} and \ref{matrix}, we derive some consequences and related results.

Recall that the Jacobson radical $J(R)$ of an associative ring $R$ is the
intersection of all primitive ideals of $R$.
We say that a ring $R$ is $J$-semisimple if $J(R) = 0$.

Combining Theorems \ref{main} and \ref{matrix} with other tools we obtain the following:

\begin{cor}
\label{ss}
For each $d >0$ there exist real numbers $f = f(d), g=g(d)$ such that the following holds.
Let $R$ be an associative $k$-algebra having a $c$-almost identity of degree $d$.
Suppose $R$ is $J$-semisimple. Then

(i) $R$ is a subdirect sum of matrix rings $M_{s_i}(k)$ ($i \in I$) with $s_i \le fc^{1/2}+g$ for all $i \in I$.

(ii) $R$ can be embedded in a matrix ring $M_s(C)$ for some commutative $k$-algebra $C$,
where $s \le fc^{1/2}+g$.

\end{cor}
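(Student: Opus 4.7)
\emph{Plan.} The idea is to use Theorem~\ref{main} to convert the $c$-almost identity into a genuine polynomial identity, decompose $R$ via Jacobson--Kaplansky structure theory, and bound the sizes of the primitive matrix components using Theorem~\ref{matrix}. By Theorem~\ref{main}, $R$ satisfies a polynomial identity and is therefore PI. Since $R$ is $J$-semisimple, Jacobson's theorem expresses $R$ as a subdirect product of primitive quotients $R_\alpha = R/M_\alpha$; each $R_\alpha$ is a primitive PI ring, so Kaplansky's theorem gives $R_\alpha \cong M_{n_\alpha}(D_\alpha)$, with $D_\alpha$ a division $k$-algebra of dimension $t_\alpha^2$ over its center $Z_\alpha \supseteq k$. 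The main task is to bound $n_\alpha t_\alpha$ uniformly.

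The pivotal step is a \emph{descent lemma}: if $\pi\colon R \twoheadrightarrow S$ is a surjective $k$-algebra homomorphism and $Q$ is a $c$-almost identity of $R$, then $Q$ is a $c$-almost identity of $S$. Starting from a witnessing decomposition $R^n = V_1 \oplus V_2$ with $\dim V_2 < \infty$ and an algebraic $X_2 \subseteq V_2$ of codim $\le c$ satisfying $e_Q(V_1 \oplus X_2) = 0$, set $U := \pi^n(V_1) \subseteq S^n$. Since $U + \pi^n(V_2) = S^n$ with $\pi^n(V_2)$ finite-dimensional, $U$ has finite codimension; pick a finite-dimensional complement $W$ and let $\phi\colon S^n \to W$ be the projection with kernel $U$. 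Because $e_Q \circ \pi^n = \pi \circ e_Q$, $e_Q$ vanishes on $U + \phi(\pi^n(X_2))$. For each fixed $u \in U$, the map $w \mapsto e_Q(u+w)$ is a polynomial map on the finite-dimensional $W$, so its zero set is Zariski closed and hence contains $Y := \overline{\phi(\pi^n(X_2))}$. A dimension count using the linear surjection $\phi \circ \pi^n|_{V_2}\colon V_2 \twoheadrightarrow W$ shows $\codim Y \le c$ in $W$, establishing the lemma. A parallel and easier check shows almost identities are also preserved under base change $A \mapsto A \otimes_k L$ for any field extension $L/k$.

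Applying the descent lemma to $R \twoheadrightarrow R_\alpha$, then the base-change principle to pass to $R_\alpha \otimes_k \bar Z_\alpha$, and finally the descent lemma again to the quotient $R_\alpha \otimes_k \bar Z_\alpha \twoheadrightarrow R_\alpha \otimes_{Z_\alpha} \bar Z_\alpha \cong M_{n_\alpha t_\alpha}(\bar Z_\alpha)$, we obtain that $Q$ is a $c$-almost identity of $M_{n_\alpha t_\alpha}(\bar Z_\alpha)$ over its algebraically closed base field. Since the proof of Theorem~\ref{matrix} is valid over any algebraically closed field, it yields $c \ge a(d)(n_\alpha t_\alpha)^2 + b(d)$, hence $n_\alpha t_\alpha \le f\sqrt{c} + g$ uniformly, for constants $f, g$ depending only on $d$. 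Setting $s := \lceil f\sqrt{c} + g \rceil$, conclusion (ii) follows by block-diagonal embeddings $R_\alpha \hookrightarrow M_s(\bar Z_\alpha)$ giving $R \hookrightarrow \prod_\alpha M_s(\bar Z_\alpha) = M_s(C)$ with commutative $C := \prod_\alpha \bar Z_\alpha$; conclusion (i) comes from the Jacobson subdirect presentation refined by this uniform bound on the matrix components. I expect the main technical obstacle to lie in the descent lemma --- particularly the Zariski-closure step that converts the constructible image into an algebraic subset of $W$ of the correct codimension --- while the structural assembly is routine PI theory.
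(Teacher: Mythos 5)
Your strategy coincides with the paper's: Theorem~\ref{main} converts the $c$-almost identity into a PI, the almost identity is pushed down to the simple subdirect factors, and Theorem~\ref{matrix} bounds their size, giving $as^2+b\le c$ and hence $s\le f\sqrt c+g$. Two points of comparison. First, the paper simply asserts that a $c$-almost identity of $R$ is a $c$-almost identity of any quotient; your descent lemma supplies the (easy but not vacuous) argument, and it is correct: $U+\phi(\pi^n(X_2))=U+\pi^n(X_2)$, the vanishing locus of $e_Q$ on each coset $u+W$ is an intersection of zero sets of the polynomials $\alpha\circ e_Q$ ($\alpha\in S^*$) and hence Zariski closed, and the fibre-dimension inequality for the linear surjection $\phi\circ\pi^n\colon V_2\to W$ gives $\codim_W Y\le c$. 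Second, where the paper quotes Theorem~1 of Amitsur to present $R$ as a subdirect sum of central simple algebras, you use Jacobson plus Kaplansky and then split each factor over $\overline{Z_\alpha}$; this forces two extra steps, namely base change of almost identities (which you only wave at, but which is routine: vanishing on the $k$-points of $V_1\times X_2$ places each $\alpha\circ e_Q$ in the ideal of that set because $k$ is algebraically closed, and the codimension of $X_2$ is unchanged under extension of the base field) and the observation that Theorem~\ref{matrix} holds with the same constants over any algebraically closed field, which is indeed true of its proof.

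The one genuine discrepancy concerns part (i). Your subdirect factors are $M_{n_\alpha}(D_\alpha)$ with centres $Z_\alpha$ possibly strictly larger than $k$, so what your argument actually yields is that $R$ is a subdirect product of simple algebras of bounded dimension over their centres, together with (ii) exactly as stated; it does not produce factors $M_{s_i}(k)$ over $k$ itself, and your closing sentence for (i) glosses over this. You should be aware, though, that the paper's own proof makes the same jump: the parenthetical ``since $k$ is algebraically closed'' does not force the centres of the primitive quotients to equal $k$ (consider $R=k(t)$, which is $J$-semisimple, satisfies $[x_1,x_2]$, and has no nonzero finite-dimensional quotients, hence is not a subdirect sum of matrix rings over $k$). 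So the literal form of (i) must be read with matrix rings over extension fields of $k$, or as a bound on the degree of the simple factors over their centres; with that reading your argument is complete and, at the delicate points (descent and splitting), more careful than the paper's.
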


\begin{proof}
To deduce this, note that $R$ is PI by Theorem \ref{main}. By Theorem 1 of Amitsur \cite{A}, $R$ is a
subdirect sum of central simple algebras over $k$, which are matrix rings $M_{s_i}(k)$ (since
$k$ is algebraically closed). Let $Q$ be a $c$-almost identity of $R$ of degree $d$.
Each $M_{s_i}(k)$ is a quotient of $R$, hence $Q$ is a $c$-almost identity of $M_{s_i}(k)$
for all $i \in I$. Let $a, b$ be as in Theorem \ref{matrix} above. Then this theorem yields
$as_i^2 + b \le c$ for all $i$, so $s_i \le \sqrt{(c-b)/a}$ for all $i$. This implies conclusion (i),
which in turn implies conclusion (ii) (with $C$ a direct sum of $|I|$ copies of $k$).
\end{proof}

Another consequence of Theorem \ref{matrix} deals with simple Lie algebras.

\begin{cor}
\label{lie}
For each $d>0$ there exist real numbers $a>0$ and $b$ such that
if $Q$ is a non-zero Lie polynomial of degree $d$ in $n$ variables,
$L$ is a finite dimensional simple Lie algebra over $k$ of characteristic zero,
then the codimension of $L_Q$ in $L^n$ is at least $a \cdot \dim L +b$.
\end{cor}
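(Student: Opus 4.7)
My plan is to reduce Corollary \ref{lie} to Theorem \ref{matrix} via a faithful matrix representation of $L$. First, expand the Lie polynomial $Q$ into an associative polynomial $Q^\ast\in k\langle x_1,\ldots,x_n\rangle$ by replacing each bracket with the commutator $[x,y]=xy-yx$. Since $\mathrm{char}(k)=0$, the Poincar\'e--Birkhoff--Witt theorem implies $Q^\ast\ne 0$ and $\deg Q^\ast=d$. Theorem \ref{matrix} then supplies constants $a_0>0$, $b_0$ depending only on $d$ with $\codim(M_{s,Q^\ast},M_s^n)\ge a_0 s^2+b_0$ for all $s\ge 1$. We may also assume every variable appears in $Q$, so $n\le d$.

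For each simple $L$, choose a faithful representation $\rho\colon L\hookrightarrow M_s(k)$. Then $L_Q=L^n\cap M_{s,Q^\ast}$ when $L^n$ is identified with $\rho^{\times n}(L^n)\subset M_s^n$, and the elementary inequality $\codim(Y\cap W,W)\ge\codim(Y,V)-\codim(W,V)$ yields
\[
\codim(L_Q,L^n)\ \ge\ a_0 s^2+b_0-n(s^2-\dim L).
\]
For type $A_r$ the natural representation has $s=r+1$, $\dim L=s^2-1$, and $\codim(L,M_s)=1$, so this reduces to $\codim(L_Q,L^n)\ge a_0\dim L+(a_0+b_0-n)$---the required bound with $a=a_0$ and $b=a_0+b_0-d$.

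For types $B,C,D$ the natural embedding has $\dim L/s^2\to 1/2$, so the inequality above is only useful when $a_0>n/2$, which we cannot guarantee. To handle these types, I would use the $\Aut(L)^\circ$-equivariance of the evaluation map $e_Q\colon L^n\to L$. Since $Q$ is not a Lie identity of $L$ once $\dim L$ exceeds a constant depending on $d$ (the image of $e_Q$ is an $\Aut(L)$-invariant linear subspace of $L$, hence equals $L$ by simplicity), the map $e_Q$ is dominant and, in characteristic zero, generically smooth, so its generic fiber has codimension $\dim L$ in $L^n$. A careful upper-semicontinuity analysis should then bound the fiber-dimension jump at the $G$-fixed point $0\in L$ by the rank $r_L$ of $L$, yielding $\codim(L_Q,L^n)\ge\dim L-r_L$. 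Since $r_L=O(\sqrt{\dim L})$ for classical $L$, this gives the desired bound (say with $a=1/2$) once $\dim L$ is large enough. The five exceptional simple Lie algebras have bounded dimension and are absorbed into the additive constant $b$.

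The main obstacle is the quantitative bound on the fiber-dimension jump at $0$ in the types $B,C,D$ case: passing from $Q$ being a non-identity of $L$ (which gives merely $\dim L_Q<n\dim L$) to the estimate $\dim L_Q\le(n-1)\dim L+r_L$ requires a Lie-algebraic input---specifically a use of the nilpotent cone of $L$ and the $\Aut(L)^\circ$-orbit stratification---that goes beyond the elementary intersection lemma which suffices for type $A$.
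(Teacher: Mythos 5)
Your reduction to Theorem \ref{matrix} works as stated only for type $A_r$ (and the finitely many exceptional algebras), and you correctly identify that your inequality $\codim(Y\cap W,W)\ge\codim(Y,V)-\codim(W,V)$ loses the term $n(s^2-\dim L)$, which kills the argument for $B_r,C_r,D_r$. The substitute you sketch for those types is a genuine gap, not just a technical one: knowing that $Q$ is not an identity of $L$ (and the span-of-image argument does show this; note the image of $e_Q$ itself is not a linear subspace, only its span is an ideal) gives merely $\dim L_Q<n\dim L$, and upper semicontinuity of fiber dimension gives no quantitative control whatsoever on the jump at the fiber over $0$. The bound you want, $\dim L_Q\le(n-1)\dim L+r_L$, is exactly attained already for $Q=[x_1,x_2]$ (Richardson: the commuting variety has dimension $\dim L+\rk L$), and proving it for an arbitrary Lie polynomial of degree $d$ would be a substantially stronger statement than Theorem \ref{matrix} itself, which only yields $\codim \ge a\cdot\dim L + b$ with small $a$. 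So as written the plan does not close the $B,C,D$ case.

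The fix is to run the embedding in the opposite direction, which is what the paper does: instead of embedding $L$ into a matrix algebra, observe that every classical simple Lie algebra of rank $r$ contains $\mathfrak{gl}_r\cong M_r(k)$ as a Lie subalgebra, and $\dim L\le 2r^2+r$. Writing $Q$ as a non-zero associative polynomial $Q^\ast$ of degree $d$ (Witt/PBW, as in your first step), one has $L_Q\cap M_r(k)^n=M_{r,Q^\ast}$, and by the affine dimension theorem intersecting with the linear subspace $M_r(k)^n\subset L^n$ can only decrease codimension (the intersection is non-empty since it contains $0$). Hence $\codim(L_Q,L^n)\ge\codim(M_{r,Q^\ast},M_r^n)\ge a_0r^2+b_0\ge \tfrac{a_0}{3}\dim L+b_0$, uniformly over all four classical families, with no case analysis and no loss proportional to $n$. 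Your type-$A$ computation is fine, but it is superseded by this uniform argument.
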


\begin{proof}
To deduce this, we may ignore the exceptional algebras (whose dimension is bounded)
and focus on the classical ones $L = A_r, B_r, C_r, D_r$. Any such algebra $L$ contains
the full matrix algebra $M_r(k)$ and satisfies $\dim L \le 2r^2 + r$. The Lie polynomial $Q$ can be viewed as an associative
polynomial of degree $d$. Its codimension
in $M_r(k)^n$ is a lower bound on the codimension of $Q$ in $L^n$. Applying Theorem \ref{matrix}
now completes the proof.
\end{proof}

Theorem \ref{matrix} and Corollary \ref{lie} have several applications. First, they immediately imply that
{\it if $R$ is an associative (resp. Lie) $k$-algebra defined by $n$ free generators
and by relators whose minimal degree is $d$,
and $S = M_s(k)$ (resp. a classical simple Lie $k$-algebra) of sufficiently large dimension (given $d$), then the representation variety
${\rm Hom}(R,S)$ has dimension at most $(n-\e)\dim S$, where $\e > 0$ depends only on $d$.}

Secondly, they imply the following.

\begin{prop}
\label{prob} For every $d \in \N$ there are positive real numbers $\e = \e(d), N = N(d)$ depending only on
$d$ such that the following holds. Let $F$ be a finite field, $M = M_s(F)$ a matrix algebra and $L$ a classical simple Lie algebra
over $F$.

(i) If $Q$ is a non-zero associative polynomial of degree $d$ with $n$ variables over $F$ and $\dim M \ge N$ then
\[
|M_Q| \le c|M|^{n-\e},
\]
where $c$ is a constant depending on $Q$.
Moreover, all fibers of the evaluation map $e_Q:M^n \to M$ have size at most $c|M|^{n-\e}$.

(ii) If $Q$ is a non-zero Lie polynomial of degree $d$ with $n$ variables over $F$ and $\dim L \ge N$ then
\[
|L_Q| \le c|L|^{n-\e},
\]
where $c$ is a constant depending on $Q$.
Moreover, all fibers of the evaluation map $e_Q:L^n \to L$ have size at most $c|L|^{n-\e}$.
\end{prop}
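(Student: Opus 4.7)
The plan is to reduce Proposition \ref{prob} to the codimension bounds of Theorem \ref{matrix} and Corollary \ref{lie} (which are statements over $\bar F$) by invoking a Lang--Weil style count of $F$-points, and to handle the uniform fiber bound by replacing $Q$ with the ``doubled'' polynomial $P(\br,\br') := Q(\br) - Q(\br')$.

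For part (i), Theorem \ref{matrix} applied over $\bar F$ gives $\dim M_{s,Q}(\bar F) \le (n-a)s^2 - b$ for $a>0$, $b$ depending only on $d$.  A standard Lang--Weil-type point-counting estimate then yields $|M_{s,Q}(F)| \le c\,|F|^{(n-a)s^2 - b}$ with $c$ depending on $Q$, and taking $\epsilon := a/2$ and $N = N(d)$ large enough that the additive constant $b$ is absorbed into $(\epsilon/2) s^2$ for $s^2 \ge N$ produces $|M_Q| \le c\,|M|^{n-\epsilon}$.  Part (ii) is entirely parallel, using Corollary \ref{lie} in place of Theorem \ref{matrix}; one checks along the way that the proof of Corollary \ref{lie} carries over to arbitrary characteristic when $L$ is classical, since it only uses that $L$ contains a copy of $M_r(k)$ as a Lie subalgebra and satisfies $\dim L \le 2r^2 + r$, both of which hold in every characteristic.

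For the ``moreover'' uniform fiber bound, set
\[
P(x_1,\ldots,x_n,x_1',\ldots,x_n') \;:=\; Q(x_1,\ldots,x_n) - Q(x_1',\ldots,x_n').
\]
This is a nonzero polynomial (respectively Lie polynomial) of degree $d$ in $2n$ variables: each nonzero monomial of $Q$ yields two distinct monomials in disjoint sets of variables which cannot cancel in $P$.  Applying Theorem \ref{matrix} (respectively Corollary \ref{lie}) to $P$ bounds the codimension of $M_{s,P}$ (respectively $L_P$) in $M_s^{2n}$ (respectively $L^{2n}$) below by $a\dim M + b$, so the same point-counting estimate gives
\[
\sum_{y \in M(F)} |e_Q^{-1}(y)|^2 \;=\; |M_{s,P}(F)| \;\le\; c\,|F|^{2n\dim M - a\dim M - b}.
\]
Since the maximum of nonnegative quantities is at most their sum, extracting a square root yields $\max_y |e_Q^{-1}(y)| \le c'\,|M|^{n - a/2}$, of the required form with $\epsilon = a/2$ (and similarly for $L$).

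The main delicacy is arranging the implicit constant in Lang--Weil to depend only on $Q$ uniformly in $s$: the variety $M_{s,Q}$ is cut out by $\dim M$ polynomial equations whose naive B\'ezout degree already grows with $s$.  This is handled either by a quantitative refinement of Lang--Weil adapted to this family of varieties (which have bounded relative complexity as $s$ varies with $Q$ fixed), or by a mild reduction of $\epsilon$ to absorb subexponential growth in $s$.  No other step in the argument is subtle.
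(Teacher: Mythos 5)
Your proposal is correct in outline and follows the same basic route as the paper: reduce to the $\bar F$-codimension bounds of Theorem~\ref{matrix} and Corollary~\ref{lie} and then convert dimension bounds into point counts over $F$. The differences are worth recording. (1) For the ``moreover'' clause the paper simply asserts that \emph{every} fiber of $e_Q$ has dimension at most $(n-\e)\dim M$, although Theorem~\ref{matrix} as stated only controls the zero fiber $M_{s,Q}$; your doubled polynomial $P(x,x')=Q(x)-Q(x')$ (nonzero of the same degree $d$, in $2n$ variables, which is harmless since $a,b$ depend only on $d$) together with the second-moment inequality $\max_y|e_Q^{-1}(y)|^2\le\sum_y|e_Q^{-1}(y)|^2=|M_{s,P}(F)|$ supplies the justification the paper leaves implicit, and this is the most genuine added value of your write-up. (2) You rightly note that Corollary~\ref{lie} is stated in characteristic zero and must be re-proved for classical $L$ over $\bar F$; the paper passes over this. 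One small caveat: for $\mathfrak{psl}_{r+1}$ with $p\mid r+1$ the natural copy of $\mathfrak{gl}_r$ only embeds modulo scalars, but embedding $\mathfrak{gl}_{r-1}$ instead repairs this with no essential change. (3) On the point count the paper invokes the Lefschetz trace formula plus Katz's Betti-number bounds \cite{Katz}, while you invoke a Lang--Weil-type estimate; in both cases the constant as literally produced depends on $s$ (Katz's bound, like a B\'ezout bound, grows with the number of variables and equations, i.e.\ at least like $\exp(Cs^2)$), and your fallback of ``absorbing subexponential growth in $s$ into $\e$'' does not work when $q$ is small and fixed, since $\exp(Cs^2)$ is then a positive power of $|M|=q^{s^2}$. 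So on this one point you are no worse off than the paper, but neither argument as written delivers a constant depending only on $Q$; a clean way to get such uniformity is to run the linear-algebra estimate in the proof of Theorem~\ref{matrix} directly as a count over $\F_q$ (each linear condition of codimension $\kappa$ cuts the count by $q^{-\kappa}$), rather than passing through cohomological bounds.
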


\begin{proof}
Extending scalars to $k=\bar F$ does not change the dimension of fibers of $e_Q$, so all such fibers
are bounded above by $(n-\epsilon)\dim M_s$ for part (i) and by $(n-\epsilon) \dim L$ for part (ii).
The dimensions of the polynomial equations defining any fiber are bounded above, depending only on the polynomial $Q$,
so the estimate of fiber cardinality follows from the Lefschetz trace formula and the upper bounds on Betti numbers for
affine varieties defined by polynomial equations of bounded degree \cite{Katz}.

\end{proof}

This result may be viewed as a ring-theoretic analogue of \cite[Theorem 1.2]{LS1}.

In particular it follows that if the finite simple algebras $M, L$ above satisfy a probabilistic
identity $Q$ (namely, the probability that $e_Q$ vanishes on a random $n$-tuple of elements of
$M$ or $L$ is at least some fixed $\delta > 0$), then $\dim M, \dim L$ are bounded above (in terms
of $Q$ and $\delta$).

Next we turn to almost nil algebras and algebras with an almost identity of the form $x^d$, as well as
their Lie analogues, namely almost Engel Lie algebras. Recall that Zelmanov's work on Engel Lie algebras
and related objects had remarkable group theoretic applications, such as the solution to the Restricted
Burnside Problem \cite{Z1, Z2}, as well as stronger results \cite{Z3}.

We start with finite dimensional algebras. We denote by $Rad(L)$ the solvable radical of a finite dimensional
Lie algebra $L$.

\begin{prop}
\label{nilp}

(i) Let $R$ be a finite dimensional associative $k$-algebra, and let $N$ be the subvariety
of nilpotent elements of $R$. If $\codim N = c$ then $\dim R/J(R) \le c^2$, so $R$ has a
nilpotent ideal of codimension at most $c^2$.

(ii) Let $L$ be a finite dimensional Lie $k$-algebra, where $k$ has characteristic zero, and let $N$ be
the subvariety of ad-nilpotent elements of $L$. If $\codim N = c$ then $\dim L/Rad(L) \le 4c^2 - c$.
Moreover, $L$ has a nilpotent subalgebra $K$ (consisting of ad-nilpotent elements) such that $\dim L/K\le 4c^2$.

\end{prop}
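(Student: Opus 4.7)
The plan is to handle both parts by passing to the semisimple quotient---$R/J(R)$ for (i) and $L/\Rad L$ for (ii)---transferring the codimension hypothesis through the quotient map, and then exploiting the well-known structure of the nilpotent cone in a (semi)simple (Lie) algebra. For (i): in a finite-dimensional algebra $J(R)$ is nilpotent, so $x\in R$ is nilpotent iff $\bar x\in R/J(R)$ is nilpotent. Hence $N=\pi^{-1}(N_{R/J(R)})$ under the quotient $\pi\colon R\to R/J(R)$, and $\codim_R N=\codim_{R/J(R)} N_{R/J(R)}$. By Wedderburn (over algebraically closed $k$), $R/J(R)\cong\prod_{i=1}^t M_{s_i}(k)$; the nilpotent variety is a product of individual cones each of codimension $s_i$, so $c=\sum s_i$. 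Therefore $\dim R/J(R)=\sum s_i^2\le(\sum s_i)^2=c^2$, and $J(R)$ itself is the desired nilpotent ideal.

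For (ii), put $S=L/\Rad L$ and let $\pi\colon L\to S$ be the quotient. Since $\ad_L x$ descends to $\ad_S\pi(x)$, $\pi$ carries ad-nilpotent elements to ad-nilpotent elements, so $\codim_S N_S\le c$. In characteristic zero the nilpotent cone of a semisimple Lie algebra has codimension equal to its rank, forcing $\rk S\le c$. A direct check against the classification (with $G_2$ and $E_8$ as extremal cases) shows every simple Lie algebra of rank $r$ has dimension at most $4r^2-r$; summing over simple factors and using $\sum r_i^2\le(\sum r_i)^2$ gives $\dim L/\Rad L\le 4c^2-c$.

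For the nilpotent subalgebra $K$, apply Lie's theorem to the solvable subalgebra $\ad_L(\Rad L)\subseteq\mathfrak{gl}(L)$ to simultaneously upper triangularize it. Then $K:=N\cap\Rad L$ is the common zero locus in $\Rad L$ of the $\dim L$ resulting diagonal characters, hence a linear subspace; it is closed under brackets and nilpotent because $\ad_L(K)$ consists of strictly upper triangular matrices and has central kernel $K\cap Z(L)$. To bound $\dim K$ from below, pick an irreducible component $N_0$ of $N$ of maximal dimension $\dim L-c$. Since $N_0$ is conical, $0\in N_0\cap\Rad L$; cutting $N_0$ by the $\codim_L\Rad L$ linear forms defining $\Rad L$, each of which reduces dimension by at most $1$ on an irreducible set through $0$, yields $\dim(N_0\cap\Rad L)\ge\dim\Rad L-c$. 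Combining, $\dim L/K\le(4c^2-c)+c=4c^2$. The main technical step is this last dimension estimate, which depends crucially on $N$ being a cone through the origin so that the intersections with the hyperplane slices defining $\Rad L$ are never empty.
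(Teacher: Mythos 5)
Your proof is correct and follows essentially the same route as the paper: Wedderburn plus the codimension-$s_i$ nilpotent cones for (i), and for (ii) the rank bound on $L/\Rad L$ via the nilpotent cone together with Lie's theorem applied to the (image of the) radical to produce the linear, nilpotent subalgebra $K=N\cap\Rad L$. The only difference is that you spell out, via the conical component and hyperplane-slicing argument, the step $\dim \Rad L/K\le c$ that the paper simply asserts from the hypothesis on $N$; this is a welcome clarification rather than a new approach.
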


\begin{proof}
To prove part (i), write $R/J(R) = \prod_{i=1}^m M_{s_i}(k)$ and let $\pi: R \to R/J(R)$ be the canonical epimorphism.
For each $i = 1, \ldots , m$ let $N_i$ denote the set of nilpotent matrices in $M_{s_i}(k)$.
Since the elements of $J(R)$ are nilpotent we have $N = \pi^{-1}(N_1 \times \ldots \times N_m)$.
It is well known (see for instance \cite[\S1.3]{Humphreys2}) that each $N_i$ is irreducible of dimension $s_i^2 - s_i$.
Hence
\[
\sum_{i=1}^m s_i = \codim N = c.
\]
This yields
\[
\dim R/J(R) = \sum_{i=1}^m s_i^2 \le c^2.
\]

The proof of the first assertion in part (ii) is similar, using the fact that the codimension of the variety of ad-nilpotent elements
of a finite dimensional simple Lie algebra of rank $r$ is $r$. To prove the second assertion, set $R = Rad(L)$, $Z=Z(L)$.
Then $R/Z \le L/Z \cong ad L \le gl(L)$. Applying Lie Theorem (see \cite[4.1]{Humphreys1}) to the solvable Lie algebra $R/Z$
we conclude that there is
a basis for $L$ with respect to which $R/Z$ is represented by upper-triangular matrices. Let $K/Z$ denote the nilpotent
elements of $R/Z$ in its action on $L$; these are the elements represented by upper triangular matrices with zero diagonal.
Hence $K/Z$ is a Lie subalgebra of $R/Z$, and $K$ is a nilpotent Lie subalgebra of $L$ (consisting of ad-nilpotent elements).
By our assumption on $N$ the codimension of $K$ in $R$ is at most $c$. Therefore
\[
\dim L/K = \dim L/R + \dim R/K \le 4c^2.
\]
\end{proof}

The example of $R = M_c(k)$ shows that the bound in Proposition \ref{nilp} (i) is sharp;
the above argument shows that it is attained if and only if $R/J(R) = M_c(k)$.
Similarly, the bound in part (ii) of the above result is dictated by $E_8$.

Proposition \ref{nilp} (i) can be extended as follows. For each monic polynomial $P(x) \in k[x]$ of degree $s$, let
$M = M_s(k)$ and let $M_P$ be the variety of matrices in $M$ whose characteristic polynomial is $P$. Then $\dim M_P = s^2-s$.
Using arguments as above, it follows that if $P$ is a $c$-almost identity of a finite dimensional associative
$k$-algebra $R$, then $\dim R/J(R) \le c^2$.

The well known Nagata-Higman Theorem states that, if $x^d$ is an identity of an associative (non-unital) $k$-algebra $R$,
where the characteristic of $k$ is zero or greater than $d$, then $R$ is nilpotent. See \cite[Chapter 6]{DF} for this and
for explicit bounds on the degree of nilpotency of $R$ in terms of $d$. Our next result deals with rings in which
$x^d$ is an almost identity.

\begin{prop}
\label{nagata}
Let $R$ be an associative $k$-algebra, and suppose $x^d$ is a $c$-almost identity of $R$. Then

(i) $R/J(R)$ is finite dimensional; in fact $\dim R/J(R) \le c^2$.

(ii) If $R$ is a finitely generated $k$-algebra then $R$ is virtually nilpotent.
\end{prop}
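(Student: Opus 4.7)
The plan is to first establish a \emph{codimension-descent lemma}: if $\pi\colon R \to R'$ is a surjective $k$-algebra homomorphism and $Q$ is a $c$-almost identity of $R$, then $Q$ is a $c$-almost identity of $R'$. This is the main technical step. Starting from a decomposition $R = V_1 \oplus V_2$ with $V_2$ finite dimensional and $R_Q \supseteq V_1 \times X_2$ for some algebraic $X_2 \subseteq V_2$ of codimension $c$, I would choose a splitting $R' = \pi(V_1) \oplus U$ with $U$ finite dimensional, and set $L\colon V_2 \to U$ to be the composition of $\pi|_{V_2}$ with projection onto $U$; this $L$ is surjective because $\pi$ is. Then $\pi(V_1 \times X_2) = \pi(V_1) \times L(X_2)$, and the standard fiber-dimension bound gives $\dim L(X_2) \ge \dim X_2 - \dim \ker L = \dim U - c$. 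The delicate point is that $L(X_2)$ is only constructible in general; but $R'_Q$ is Zariski-closed, so it also contains the closure $\overline{L(X_2)}$, which is algebraic of codimension $\le c$ in $U$.

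Next I would apply Theorem~\ref{main} to deduce that $R$ is PI, and then invoke Amitsur's theorem (as in the proof of Corollary~\ref{ss}) to write $R/J(R)$ as a subdirect sum of matrix algebras $M_{s_i}(k)$, $i \in I$. Since $R$ is PI, every primitive ideal $\m_i$ is maximal by Kaplansky's theorem, so any finite collection $\{\m_i\}_{i \in S}$ is pairwise comaximal, and by the Chinese Remainder Theorem $\prod_{i\in S} M_{s_i}(k)$ is a quotient of $R$. By the descent lemma, $x^d$ is a $c$-almost identity of this finite product. Since $\{x \in M_s(k) : x^d = 0\}$ is contained in the nilpotent variety of codimension $s$, the zero set of $x^d$ in $\prod_{i \in S} M_{s_i}(k)$ has codimension at least $\sum_{i \in S} s_i$. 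Hence $\sum_{i\in S} s_i \le c$ for every finite $S \subseteq I$, which forces $|I| \le c$ and $\sum_{i\in I} s_i \le c$. Therefore $R/J(R)$ is finite dimensional, with
\[
\dim R/J(R) \;=\; \sum_{i \in I} s_i^2 \;\le\; \Bigl(\sum_{i\in I} s_i\Bigr)^2 \;\le\; c^2,
\]
establishing (i).

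For (ii), $R$ is finitely generated by hypothesis and PI by Theorem~\ref{main}; the Braun--Kemer--Razmyslov theorem then gives that $J(R)$ is a nilpotent ideal. Combined with (i), $J(R)$ is a nilpotent ideal of codimension at most $c^2$, so $R$ is virtually nilpotent. The only non-elementary external input beyond Theorem~\ref{main} is Braun--Kemer--Razmyslov, and the only genuinely new work is the codimension-descent lemma above, which is why I expect it to be the principal obstacle.
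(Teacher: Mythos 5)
Your proposal is correct and takes essentially the same route as the paper: Theorem~\ref{main} plus Amitsur's theorem gives the subdirect decomposition of $R/J(R)$ into matrix algebras, the codimension of the nilpotent cone forces $\sum_{i} s_i \le c$ over any finite set of factors (exactly the computation behind Proposition~\ref{nilp}(i), which the paper cites rather than redoes), and Braun's theorem on the nilpotency of $J(R)$ for finitely generated PI algebras gives (ii). The only packaging differences are that you prove the descent of a $c$-almost identity to quotients explicitly (the paper uses this silently, here and in Corollary~\ref{ss}) and that you realize finite subproducts as quotients via comaximality and CRT instead of the paper's reduction to finite dimensional quotients via residual finite dimensionality; both steps are sound.
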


Here an algebra $R$ is said to be virtually nilpotent if it has a (two-sided) nilpotent ideal of finite codimension.

\begin{proof}
To prove Proposition \ref{nagata}, apply Corollary \ref{ss} to deduce that $R/J(R)$ is a subdirect sum
of matrix rings $M_{s_i}(k)$ ($i \in I$). In particular, $R/J(R)$ is
residually finite dimensional, so it suffices to show that all its finite dimensional
quotients have dimension at most $c^2$. Let $S$ be such a quotient. Then $J(S)=0$ and the codimension
of the solutions of $x^d=0$ in $S$ is at most $c$. In particular, the codimension of the nilpotent elements
of $S$ is at most $c$. Applying Proposition \ref{nilp} (i) to $S$ we obtain
$\dim S = \dim S/J(S) \le c^2$, proving part (i).

To prove part (ii) we apply the Kemer-Braun Theorem on the nilpotency of $J(R)$ for finitely generated PI algebras $R$ \cite{B2}.
Since $R$ is PI by Theorem \ref{main} above, part (ii) now follows from part (i).
\end{proof}

\bigskip

We now turn to the proofs of the main results, namely Theorems \ref{main} and \ref{matrix}.

\begin{lem}
\label{density}
If $\fI\subset A$ is a non-zero ideal and $\pi\colon V \to W$ is a $k$-linear map onto a finite dimensional vector space,
then every element of $\pi(V)$ can be expressed as the sum of two elements of $\pi(V\setminus V(\fI))$.
\end{lem}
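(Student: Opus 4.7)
The plan is: given $w \in W$, fix any preimage $v \in V$ with $\pi(v) = w$. I will find $u \in V$ such that both $u$ and $v-u$ lie outside $V(\fI)$; then $v_1 := u$ and $v_2 := v-u$ satisfy $\pi(v_1) + \pi(v_2) = w$ with $v_1, v_2 \in V\setminus V(\fI)$, as required. Note that $\pi$ plays only a superficial role in the statement --- the work is to produce a decomposition $v = u + (v-u)$ in $V$ avoiding $V(\fI)$ on both summands, regardless of what $\pi$ does.

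To produce such a $u$, pick any nonzero $f\in\fI$. Because $f$ is a polynomial function on $V$, it factors through a finite-dimensional linear projection: there exist a $k$-linear map $\sigma\colon V\to k^N$ and a nonzero $F\in k[t_1,\dots,t_N]$ with $f = F\circ \sigma$. Consider
\[
g(u) := f(u)\, f(v-u) = F(\sigma(u))\, F(\sigma(v) - \sigma(u)).
\]
Viewed as a polynomial on $k^N$, $g$ is the product of two nonzero elements of the integral domain $k[t_1,\dots,t_N]$, hence nonzero. Since $k$ is algebraically closed, in particular infinite, a nonzero polynomial in finitely many variables cannot vanish identically on $k^N$, so there exists $u\in V$ with $g(u)\ne 0$, which means $f(u)\ne 0$ and $f(v-u)\ne 0$, and therefore $u,v-u\notin V(\fI)$.

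No step here looks like a serious obstacle; the content of the lemma is the standard observation that a nonzero ideal cuts out a ``thin'' set, and two thin sets translated against each other inside $V$ cannot cover $V$ when $k$ is infinite. The only mild point requiring attention is the reduction to a finite-dimensional polynomial ring via the fact that a single element of $A$ depends on only finitely many coordinates of $V$; this is why it is essential that we work with a fixed $f$ rather than the whole ideal $\fI$.
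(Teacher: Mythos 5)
Your proof is correct, and it takes a somewhat different route from the paper's, essentially by noticing that $\pi$ is irrelevant: you pick one preimage $v$ of $w$ and split it inside $V$ as $v = u + (v-u)$ with both summands off $V(\fI)$, reducing everything to the fact that $F(t)\,F(\sigma(v)-t)$ is a nonzero polynomial in finitely many variables over an infinite field. The paper instead pushes the problem down to the target: it also reduces to a single nonzero $f\in\fI$, chooses a surjective $\phi\colon V\to W_0$ through which $f$ factors, replaces $W$ by $W\oplus W_0$ and $\pi$ by $(\pi,\phi)$ so that $f$ descends to $W$, observes that $\pi(V\setminus V(\fI))$ then contains a nonempty Zariski-open subset $U$ of $W$ (using that surjective linear maps between finite-dimensional spaces carry dense sets to dense sets), and concludes by $U+U=W$ from irreducibility of affine space. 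The underlying fact is the same in both arguments --- for any $w$, the two conditions ``$f\neq 0$ at the point'' and ``$f\neq 0$ at its complement relative to $w$ (resp.\ $v$)'' can be met simultaneously because a product of two nonzero polynomials is nonzero --- but your version is slightly more economical, as it bypasses the enlargement of $W$ and the density-of-images step. One small point you should make explicit: you need the linear map $\sigma\colon V\to k^N$ to be surjective (or else you must replace $k^N$ by $\sigma(V)$ and note that $F$ restricted to $\sigma(V)$ is still a nonzero polynomial, since $f\neq 0$ as a function and $k$ is infinite); otherwise the nonvanishing point $t_0$ of $F(t)F(\sigma(v)-t)$ need not lift to $V$. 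This is easily arranged by writing $f$ as a polynomial in finitely many linearly independent linear functionals, and the word ``projection'' in your write-up suggests you intended exactly that, so it is a matter of wording rather than a gap.
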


\begin{proof}
As $\fI$ is non-zero, it contains $(f)$ for some non-zero polynomial $f\in A$, so it suffices to prove the theorem when $\fI = (f)$.
We choose a surjective linear map $\phi\colon V\to W_0$, for some finite-dimensional $W_0$, such that $f$ factors through $\phi$.
For finite dimensional vector spaces, the image of a Zariski-dense subset under any $k$-linear surjective map is again Zariski-dense.
Replacing $W$ by $W\oplus W_0$ and $\pi$ by $(\pi,\phi)$,
we may assume without loss of generality that $f$ comes by composition by $\pi$ from a well-defined non-zero polynomial on $W$.  Thus, $\pi(V\setminus V(\fI))$
is the complement of the zero locus of a non-zero polynomial.  By the irreducibility of affine spaces,  every non-empty Zariski-open subset $U$ of a vector space $\pi(V)$ has the property
that $U+U = \pi(V)$.
\end{proof}

We fix positive integers $n$ and $d$.
Let $M^d$ denote the set of
ordered $d$-tuples of elements (or, equivalently, $d$-term sequences) in $\{1,2,\ldots,n\}$.  We identify $I = (i_1,\ldots,i_d)\in M^d$
with $x_{i_1}\cdots x_{i_d}\in k\langle x_1,\ldots,x_n\rangle$.
For $\br = (r_1,\ldots,r_n)\in R^n$, we define
$$\br_I := r_{i_1}\cdots r_{i_d}.$$
Let
$$M^{\le d} = \bigcup_{i=0}^d M^i,$$
and let $k\langle x_1,\ldots,x_n\rangle^{\le d}$ denote its linear span.

\begin{prop}
\label{indep}
Given $n$ and $d$ as above, there exists a non-zero homogeneous non-commutative polynomial $P\in k\langle x_1,\ldots,x_n\rangle$ such that if $R$ is an associative $k$-algebra
and $\br\in R^n$ satisfies $P(\br) \neq 0$, then
$\{\br_I\mid I\in M^{\le d}\}$ is linearly independent.
\end{prop}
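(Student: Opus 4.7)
Set $N := |M^{\le d}|$ and enumerate $M^{\le d} = \{I_1, \ldots, I_N\}$. The plan is to take $P$ to be a Capelli-style alternating polynomial: pick ``separator'' index tuples $J_0, J_1, \ldots, J_N \in \bigcup_{e \ge 0} M^e$, to be specified below, and set
$$P(\br) := \sum_{\sigma \in S_N} \mathrm{sgn}(\sigma)\, \br_{J_0}\, \br_{I_{\sigma(1)}}\, \br_{J_1}\, \br_{I_{\sigma(2)}}\cdots\br_{J_{N-1}}\, \br_{I_{\sigma(N)}}\, \br_{J_N}.$$
By construction $P$ is multi-homogeneous. It remains to verify: (a) if $\{\br_{I_j}\}_{j=1}^{N}$ is linearly dependent in $R$, then $P(\br) = 0$; (b) with an appropriate choice of the $J_i$, the polynomial $P$ is non-zero in the free algebra $F := k\langle x_1, \ldots, x_n\rangle$.

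Claim (a) is a standard Capelli-alternation argument. Viewed as a function of the formal slots $a_j := \br_{I_j}$, the expression $P(\br)$ is alternating, since swapping $a_j$ and $a_k$ is tantamount to right-composing the summation index $\sigma$ with the transposition $(j\,k)$, which reverses every sign. Consequently, if $a_N = \sum_{j < N} c_j a_j$ is a nontrivial linear relation, then expanding by multilinearity expresses $P(\br)$ as a sum of Capelli evaluations each of which has a repeated argument and therefore vanishes.

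For (b), I would take $J_i := (1, \underbrace{2, \ldots, 2}_{L_i}, 1)$ for strictly increasing $L_0 < L_1 < \cdots < L_N$ all strictly greater than $d$. In the free-algebra expansion of any single term of $P$, each separator $\br_{J_i}$ contributes a maximal $x_2$-run of length $L_i > d$, flanked by $x_1$'s. Since every $I_j$ has length at most $d$, no run of $d+1$ or more consecutive $x_2$'s can arise inside any $I_j$ or at the junction of consecutive $I_j$'s. Therefore, from any resulting word in $F$ one can identify the positions and lengths of the separators, thereby pinning down which $J_i$ sits where, and then read off the intermediate blocks as $I_{\sigma(1)}, \ldots, I_{\sigma(N)}$ in order. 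This recovers $\sigma$ uniquely, so distinct $\sigma$'s contribute distinct monomials, no cancellation occurs, and $P \ne 0$ in $F$.

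The main obstacle is the combinatorial step (b)---engineering the separators so that all $N!$ terms of the sum survive as distinct words in $F$. Once this is established, (a) and multi-homogeneity are immediate, and $P$ furnishes the polynomial demanded by the proposition. For $n = 1$ the set-up is commutative and the claim reduces to a one-variable statement that can be handled separately.
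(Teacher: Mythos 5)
Your construction is correct in the regime where the proposition is actually used, and it takes a genuinely different route from the paper. The paper argues by counting: assuming a relation of degree exactly $d$ with indeterminate coefficients $z_J$, it rewrites monomials modulo the ideal generated by $x_I-\sum_J z_Jx_J$ (the score function of Lemma~\ref{distinct} makes the rewriting terminate), and then compares the dimension $\ge n^N$ of $k\langle x_1,\ldots,x_n\rangle^{\le N}$ with the subexponentially smaller space of $I$-avoiding monomials times bounded-degree polynomials in the $z_J$ (Lemma~\ref{pattern}); this forces a nonzero $\alpha$ lying in the ideal for every specialization of the $z_J$, and $P$ is assembled as a product over $I\in M^d$ with an induction on $d$. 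You instead write down an explicit Capelli-type alternating sum with separators $x_1x_2^{L_i}x_1$, $L_i>d$. Your step (a) is the standard pairing argument ($\sigma\leftrightarrow\sigma\circ(j\,k)$); read as ``the interleaved expression vanishes whenever two slots carry equal entries,'' it is characteristic-free (in characteristic $2$ the paired terms still cancel), so linear dependence of the family $(\br_I)_{I\in M^{\le d}}$ indeed forces $P(\br)=0$. Your step (b) is sound: the maximal $x_2$-runs of length $>d$ in any term are exactly the $N+1$ separator cores, the flanking $x_1$'s prevent these from merging with runs inside the $I$-blocks (each of length $\le d$, and never adjacent to one another), so each word determines $\sigma$, there is no cancellation, and $P\ne0$; homogeneity is clear.

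Two remarks. First, your plan to ``handle $n=1$ separately'' cannot work: for $n=1$ and $d\ge1$ the statement is false as written (take $R=k$ and $r=1$; any nonzero homogeneous $P$ has $P(r)\ne0$ while $1,r,\ldots,r^d$ are linearly dependent). This is harmless, since Theorem~\ref{main} invokes the proposition with $m=n+\max(n,c)\ge2$ variables, and the paper's own counting argument likewise implicitly requires at least two variables for the inequality (\ref{ineq}) to beat the polynomial factor coming from the $z_J$. Second, on what each approach buys: yours is shorter and fully explicit for associative algebras, dispensing with Lemmas~\ref{pattern} and~\ref{distinct}, the generic-coefficient rewriting, and the induction on $d$. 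The paper's method, while less explicit, uses nothing beyond the growth rate of the relevant free algebra, which is why it transfers verbatim to the Lie and Jordan analogues discussed after Theorem~\ref{main}; a Capelli-type interleaved product is not itself a Lie or Jordan polynomial, so your construction does not extend to those settings without new ideas.
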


To prove this we need some preparations.
For $I\in M^d$ and any positive integer $N$, we let $S_{N,I}$ denote the set of distinct $N$-term sequences in $\{1,\ldots,n\}$ for which $I$ is not a consecutive subsequence.
We identity $S_{N,I}$ with a subset of the monomials in $k\langle x_1,\ldots,x_n\rangle$.

\begin{lem}
\label{pattern}
With notation as above,
$$\limsup_N |S_{N,I}|^{1/N} < n.$$
\end{lem}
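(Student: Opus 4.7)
The plan is to prove the estimate by a direct block-counting argument: if a word of length $N$ avoids $I$ as a factor, then in particular it avoids $I$ on every \emph{aligned} window of length $d$, and the fraction of length-$d$ blocks equal to $I$ being forbidden is a fixed positive proportion, which gives exponential savings.

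More precisely, I would write $N = dk + r$ with $0 \le r < d$, and chop the first $dk$ letters of a sequence $w \in \{1,\ldots,n\}^N$ into $k$ consecutive blocks of length $d$. If $w \in S_{N,I}$, then $I$ does not appear as a factor anywhere, so in particular none of these $k$ aligned blocks can equal $I$. Hence each aligned block has at most $n^d - 1$ allowed values, while the trailing $r$ letters are arbitrary, giving
$$|S_{N,I}| \le (n^d - 1)^k \cdot n^r.$$
Taking $N$-th roots yields
$$|S_{N,I}|^{1/N} \le (n^d-1)^{k/N}\, n^{r/N},$$
and letting $N \to \infty$ (so that $k/N \to 1/d$ and $r/N \to 0$) gives
$$\limsup_N |S_{N,I}|^{1/N} \le (n^d - 1)^{1/d} < n,$$
as required.

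There is no real obstacle here; the only mild point is to notice that containment in $S_{N,I}$ is \emph{more} restrictive than avoiding $I$ on aligned windows, so the upper bound on $|S_{N,I}|$ is valid. The strict inequality $(n^d-1)^{1/d} < n$ is immediate from $n^d - 1 < n^d$. Note also that the argument gives an explicit constant $(n^d-1)^{1/d}$ for the limsup, which could be used if a quantitative bound is needed later.
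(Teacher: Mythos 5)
Your argument is correct and is essentially the paper's own proof: both chop the word into aligned length-$d$ blocks, bound each block by $n^d-1$ choices, and conclude $\limsup_N |S_{N,I}|^{1/N} \le (n^d-1)^{1/d} < n$. No issues.
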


\begin{proof}
Write $N = qd + r$ where $q, r \in \N$ and $0 \le r < d$. Avoiding the subsequence $I$ in locations $td+1, \ldots , (t+1)d \le N$
($t \in \N$) yields $|S_{N,I}| \le (n^d-1)^q n^r \le (n^d-1)^{N/d} n^d$. Therefore
\[
|S_{N,I}|^{1/N} \le (n^d-1)^{1/d} n^{d/N},
\]
so $\limsup_N |S_{N,I}|^{1/N} \le (n^d-1)^{1/d} < n$.

\end{proof}

Given an $n$-tuple of functions $\bF := (F_1(x),\ldots,F_n(x))$, $F_i\colon \Z^{>0}\to \Z^{>0}$, and a sequence $\ba := a_1,a_2,\ldots,a_N\in \{1,\ldots,n\}$, we define the
\emph{score} of $\ba$ with respect to $\bF$ to be
$$\sigma(\ba) := \sum_{j=1}^N F_{a_j}(j).$$
Given $d$ and $n$, we construct $\bF$ as follows.  We fix a prime $p > 2^{dn}$.  For $1\le i\le n$ and $j>0$, we let $r$ denote the remainder when $jn+i$ is divided by $dn$ and define
$F_i(j)$ to be the unique integer congruent to $2^r$ (mod $p$) for which
$$jp \le F_i(j) < (j+1)p.$$
Thus, for any sequence $\ba$ of length $N$,
\begin{equation}
\label{bounds}
\frac{N(N+1)}2p\le \sigma(\ba) < \frac{(N+1)(N+2)-2}2p.
\end{equation}
This implies that if the sequence $\ba$ is longer than the sequence $\bb$, then $\sigma(\ba) > \sigma(\bb)$.

\begin{lem}
\label{distinct}
For all integers $t$ and $N$ with $1\le t\le N-d+1$,
if $\ba$ and $\bb$
are distinct $N$-term sequences as above, and $a_j= b_j$ except if $t\le j < t+d$,
then $\sigma(\ba) \neq \sigma(\bb)$.
\end{lem}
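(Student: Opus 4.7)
The plan is to reduce $\sigma(\ba) - \sigma(\bb)$ to a signed sum of distinct powers of $2$ and then invoke uniqueness of binary expansions. First I would set $r(i, j) := (jn + i) \bmod dn$ and observe that because $p > 2^{dn}$ and $r(i,j) < dn$, the residue of $2^{r(i,j)}$ modulo $p$ is just $2^{r(i,j)}$ itself, so $F_i(j) = jp + 2^{r(i,j)}$. Since $a_j = b_j$ for $j \notin [t, t+d-1]$, the contributions of those indices cancel in $\sigma(\ba) - \sigma(\bb)$; and for $j$ inside the window, the $jp$ summand is identical on both sides. This reduces the statement to proving
$$D \;:=\; \sum_{j=t}^{t+d-1} \bigl(2^{r(a_j,j)} - 2^{r(b_j,j)}\bigr) \;\neq\; 0$$
whenever $\ba \neq \bb$.

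Next I would analyze the exponents combinatorially. As $j$ runs through the $d$ consecutive integers in the window, $jn \bmod dn$ hits each of $0, n, 2n, \ldots, (d-1)n$ exactly once. For $k = 0, \ldots, d-1$ let $B_k := \{(kn + i) \bmod dn : 1 \le i \le n\}$; these $d$ blocks partition $\{0, 1, \ldots, dn-1\}$. For the unique $j$ in the window with $jn \equiv kn \pmod{dn}$, the map $i \mapsto r(i, j)$ is a bijection from $\{1, \ldots, n\}$ onto $B_k$. Setting $S_a := \{r(a_j, j) : t \le j < t + d\}$ and analogously $S_b$, each is a $d$-element subset of $\{0, 1, \ldots, dn-1\}$ containing exactly one representative of each $B_k$, and the value $a_j$ (resp.\ $b_j$) is recoverable from the element of $S_a$ (resp.\ $S_b$) sitting in the block attached to $j$. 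Hence $\ba \ne \bb$ forces $S_a \ne S_b$.

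Finally, after cancelling terms common to $S_a$ and $S_b$,
$$D \;=\; \sum_{s \in S_a \setminus S_b} 2^s \;-\; \sum_{s \in S_b \setminus S_a} 2^s,$$
a nontrivial signed sum of distinct powers of $2$ with exponents in $\{0, 1, \ldots, dn-1\}$. Letting $M$ denote the largest exponent occurring in the symmetric difference $S_a \triangle S_b$, the term $\pm 2^M$ has magnitude exceeding the sum of all other terms (which is at most $2^M - 1$), so $D \ne 0$ and therefore $\sigma(\ba) \ne \sigma(\bb)$.

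The one step deserving care is the combinatorial one: one must verify that the window width $d$, the alphabet size $n$, and the modulus $dn$ are in precise balance so that the exponents $r(a_j, j)$ for $j$ in a window of length $d$ land in pairwise disjoint blocks whose union is all of $\{0, 1, \ldots, dn-1\}$. Once that block structure is in hand, the prime $p$ plays no further role in the present lemma; its purpose is the length-monotonicity recorded in (\ref{bounds}), together with the separation of scales between $jp$ and $2^{r(i,j)}$.
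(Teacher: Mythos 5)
Your proof is correct and takes essentially the same route as the paper's: both reduce to the window $t\le j<t+d$, observe that the exponents $r(a_j,j)$ fall in disjoint blocks partitioning $\{0,\ldots,dn-1\}$ (so distinct letters give distinct sets of powers of $2$), and conclude from uniqueness of sums of distinct powers of $2$. The only cosmetic difference is that the paper phrases the conclusion as a non-congruence mod $p$, whereas you use the exact identity $F_i(j)=jp+2^{r(i,j)}$ and the dominance of the largest power of $2$; your explicit block-partition check simply spells out the step the paper compresses into its final sentence.
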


\begin{proof}
It suffices to prove that
$$\sum_{j=t}^{t+d-1} F_{a_j}(j) \neq \sum_{j=t}^{t+d-1} F_{b_j}(j).$$
We show that the two sides are not congruent to one another (mod $p$).
Each is congruent (mod $p$) to a sum of $d$ elements of the set $\{1,2,4,8,\ldots,2^{dn-1}\}$.
As $p > 2^{dn}$, two such sums are congruent (mod $p$) if and only if the sums of
powers of $2$ are the same, i.e., if and only if the set of remainders obtained when  $jn+a_j$ is divided by $dn$
is the same as the set of remainders obtained when $jn+b_j$ is divided by $dn$ as $j$ ranges over $t,\ldots,t+d-1$.
As $1\le a_j,b_j\le n$, this is equivalent to the condition that $a_j=b_j$ for all $j$ in this range, contrary to hypothesis.
\end{proof}

We now prove Proposition~\ref{indep}.

\begin{proof}
By induction on $d$, we may assume that $\br$ satisfies some non-commutative polynomial equation of degree exactly $d$, i.e., that
for some $I\in M^d$,
$\br_I$ can be expressed
\begin{equation}
\label{relation}
\br_I = \sum_{J\in M^{\le d}\setminus \{I\}} c_J\br_J.
\end{equation}
By Lemma~\ref{distinct}, without loss of generality, we may assume that $c_J \neq 0$ implies $\sigma(J) < \sigma(I)$.

We introduce formal (commuting) variables $z_J$ indexed by $J\in M^{\le d}\setminus \{I\}$.
Let $B$ denote the (commutative) polynomial algebra over $k$ generated by the variables  $z_J$ and consider the free associative $B$-algebra
$B\langle x_1,\ldots,x_n\rangle$.  Let $\fI$ denote the two-sided ideal in this algebra generated by
$$x_I - \sum_{\{J\mid c_J \neq 0\}} z_J x_J.$$
Let $B\langle x_1,\ldots,x_n\rangle_{\le N}$ denote the $k$-subspace of
$B\langle x_1,\ldots,x_n\rangle$ consisting of monomials in $S_{M,I}$ for $M\le N$ multiplied by polynomials in the $z_J$ of degree
at most the maximal score of a sequence of length $\le N$.

We claim that every element $\alpha$ of $k\langle x_1,\ldots,x_n\rangle^{\le N}$ is congruent (mod $\fI$)
to an element $\beta$ of $B\langle x_1,\ldots,x_n\rangle_{\le N}$.
We iteratively construct a sequence of elements of $B\langle x_1,\ldots,x_n\rangle$ which lie in $\alpha+\fI$.
Each monomial $x_{j_1}\cdots x_{j_M}$ which is not in $S_{M,I}$
can be replaced by a linear combination of monomials
associated to sequences of lower score than $j_1,\ldots,j_M$, with coefficients of the form $z_J$.
The number of steps before this process terminates is at most the maximum score of a monomial of degree $N$,
which, by (\ref{bounds}), is less than $p(N^2+3N)/2$.
The resulting element, $\beta$,
lies in  $B\langle x_1,\ldots,x_n\rangle_{\le N}$.  The space of polynomials in $|M^{\le d}|-1$ variables of degree less than $p(N^2+3N)/2$
has dimension at most $C N^{2(|M^{\le d}|-2)}$ for some $C$ not depending on $N$.

For any $\epsilon > 0$, we have
\begin{equation}
\label{ineq}
\dim k\langle x_1,\ldots,x_n\rangle^{\le N} > n^N > C N^{2(|M^{\le d}|-2)} (n-\epsilon)^N.
\end{equation}
By Lemma~\ref{pattern}, when $N$ is sufficiently large,
$$\dim k\langle x_1,\ldots,x_n\rangle^{\le N} > \dim B\langle x_1,\ldots,x_n\rangle_{\le N},$$
and it follows that there exists a non-zero $\alpha$ which is equivalent to $\beta = 0$
(mod $\fI$).

Substituting $c_J$ for each $z_J$ and $r_i$ for each $x_i$ in $\alpha$, by (\ref{relation}), we get $0$.  Defining $P_I$ to be $\alpha$
and defining $P$ to be the product of $P_I$ over all $I\in M^d$, we obtain a polynomial which vanishes on any $\br$ for which $Q(\br)=0$ for any
non-commutative polynomial $Q$ of degree $d$.

\end{proof}

\begin{lem}
\label{subspace}
Let $r_1,\ldots,r_n$ denote elements of an associative algebra $R$ such that the
monomials of degree $\le d$ in the $r_i$ are linearly independent.  If $s_1,\ldots,s_m$
are linearly independent  in $\Span_k(r_1,\ldots,r_n)$, then the monomials of degree $\le d$ in the $s_j$ are linearly independent.
\end{lem}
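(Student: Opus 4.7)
The plan is to recast the hypothesis in a coordinate-free form and then reduce the conclusion to an elementary fact about tensor products. Set $V := \Span_k(r_1, \ldots, r_n)$; since the degree-one monomials $r_i$ sit among the degree-$\le d$ monomials, they are themselves linearly independent, so $\dim V = n$. The hypothesis that the degree-$\le d$ monomials in the $r_i$ are linearly independent is then precisely the assertion that the multiplication map
$$\mu : \bigoplus_{e=0}^d V^{\otimes e} \to R, \qquad v_1 \otimes \cdots \otimes v_e \mapsto v_1 v_2 \cdots v_e,$$
is injective. Crucially, this statement refers only to the subspace $V \subset R$ and not to the particular spanning set $\{r_i\}$ chosen to describe $V$.

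Once this reformulation is in hand, the conclusion is immediate. Since $s_1, \ldots, s_m$ are linearly independent in $V$, they extend to a basis of $V$; with respect to this basis, the tensors $s_{j_1} \otimes \cdots \otimes s_{j_e}$ (with $1 \le j_i \le m$ and $0 \le e \le d$) form a subset of the associated monomial basis of $\bigoplus_{e=0}^d V^{\otimes e}$, and are therefore linearly independent there. Applying the injective map $\mu$ sends them to the degree-$\le d$ monomials in the $s_j$, which are consequently linearly independent in $R$.

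There is no real obstacle here; the only substantive point is the basis-free reformulation of the hypothesis in terms of injectivity of $\mu$ on a direct sum of tensor powers, which makes the passage from the spanning set $\{r_i\}$ to the spanning set $\{s_j\}$ transparent.
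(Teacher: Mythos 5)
Your proof is correct and follows essentially the same route as the paper: both arguments recast linear independence of the degree-$\le d$ monomials as injectivity of the product map $\bigoplus_{i=0}^d V^{\otimes i}\to R$ and then observe that the tensors built from the linearly independent $s_j$ inject into this direct sum. The only cosmetic difference is that you extend the $s_j$ to a basis of $V$ rather than phrasing the step as factoring through $\bigoplus_{i=0}^d W^{\otimes i}$ with $W=\Span_k(s_1,\ldots,s_m)$, which is the same idea.
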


\begin{proof}
Let $V$ denote the span of the $r_i$ and $W$ the span of the $s_j$.  The linear independence of the degree $d$ monomials in the $r_i$ (resp. $s_j$) is equivalent to the injectivity of the product map $\bigoplus_{i=0}^dV^{\otimes i}\to R$ (resp. $\bigoplus_{i=0}^dW^{\otimes i}\to R$), so the lemma follows from the fact that the latter map factors through the former and the map
$$\bigoplus_{i=0}^dW^{\otimes i}\to \bigoplus_{i=0}^dV^{\otimes i}$$
is injective.
\end{proof}

\begin{lem}
\label{dim}
If $n<m$ are positive integers, the algebraic set $N_{n,m}$ consisting of
of $n\times m$ matrices of rank strictly less than $n$ has codimension at least $1+m-n$
in $M_{n\times m}(k) = k^{mn}$.
\end{lem}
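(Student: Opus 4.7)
The plan is to bound $\dim N_{n,m}$ from above by introducing the incidence variety of matrices paired with a projectivized nonzero vector in their left kernel, then comparing its two projections. Concretely, I would set
$$I := \{(A,[v]) \in M_{n \times m}(k) \times \P^{n-1} : v^T A = 0\}$$
and analyze the two obvious projections.

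First I would project $I$ onto the $\P^{n-1}$ factor. For a fixed class $[v]$, the fiber is the linear subspace of $M_{n \times m}(k)$ consisting of matrices whose $m$ columns all annihilate $v$, i.e., lie in the hyperplane $v^\perp \subset k^n$; this subspace has dimension $(n-1)m$. So $I$ is the total space of a rank-$(n-1)m$ vector bundle over $\P^{n-1}$, which in particular makes it irreducible of dimension
$$\dim I = (n-1)m + (n-1) = nm - m + n - 1.$$

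Next I would project $I$ onto the $M_{n \times m}$ factor. Its image is exactly $N_{n,m}$, since having rank $< n$ is equivalent to possessing a nonzero left-kernel vector. Picking any matrix $A_0$ of rank exactly $n-1$ (which exists since $n \le m$), its left kernel is one-dimensional, so the fiber over $A_0$ is a single point. Combining irreducibility of $I$ with the existence of this zero-dimensional fiber gives $\dim N_{n,m} = \dim I = nm - m + n - 1$, whence $\codim N_{n,m} = 1 + m - n$. This matches the claimed lower bound (and in fact realizes it as an equality).

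The only mildly delicate step is upgrading one zero-dimensional fiber to $\dim N_{n,m} = \dim I$. I would handle this by upper-semicontinuity of fiber dimension for a surjection of irreducible varieties; alternatively, one avoids that by observing that the locus of rank-exactly-$(n-1)$ matrices is Zariski-open and nonempty in $N_{n,m}$, hence dense, and over this dense open set the projection $I \to N_{n,m}$ is a bijection.
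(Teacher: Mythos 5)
Your proof is correct and is essentially the paper's argument: both introduce the incidence variety of pairs consisting of a matrix together with a (left) kernel vector and compare its two projections, the paper working with the affine version $Z_{n,m}$ and bounding $\dim N_{n,m}$ by observing that the fibers of the projection to matrices have dimension at least $1$, while you projectivize the kernel vector and use irreducibility plus a zero-dimensional fiber. Your variant in fact pins down the exact codimension $1+m-n$, which the paper only notes afterwards by citing Eisenbud, and your use of $\P^{n-1}$ neatly avoids the $v=0$ fiber that the paper's affine formulation glosses over.
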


\begin{proof}
If $Z_{n,m}$ consists of ordered pairs $(X,v)$ consisting of an $n\times m$ matrix and a column vector of size $n$ such that $Xv=0$, then $Z_{n,m}$ projects onto the $n$-dimensional space of column vectors $v$, and the dimension of every fiber except the $0$-fiber is $mn-m$, so $\dim Z_{n,m} = mn+n-m$.
Projecting onto the first factor, the non-empty fibers have dimension at least $1$, so the dimension of the image is at most $mn+n-m-1$.  Its codimension is therefore at least $1+m-n$.
\end{proof}

In fact, the codimension is exactly $1+m-n$; this follows immediately from \cite[Exercise 10.10]{Eisenbud}.

We can now prove Theorem~\ref{main}.

\begin{proof}
Assume that $Q$ is a $c$-almost identity of $R$.  We fix a $k$-linear direct sum decomposition $R^n = V =  V_1\oplus V_2$ where $V_2$ is finite dimensional and contains an
algebraic set $X_2$ of codimension $c$ such that $Q$ vanishes on $V_1\times X_2$.
We fix $m := n+\max(n,c)$.  By Proposition~\ref{indep}, there exists a non-zero element $P\in k\langle y_1,\ldots,y_m\rangle$ such that $P(r_1,\ldots,r_m)\neq 0$
implies that the monomials in $r_i$ of degree $\le d$ are linearly independent.

We assume that $P$ is not identically zero on $R$.
Applying Lemma~\ref{density} to $(P)$, we may choose the $r_i$ such that $(r_1+r_2,r_3+r_4,\ldots,r_{2n-1}+r_{2n})\in V_1\times X_2$.

Right-multiplication by the column vector $(r_1\,r_2\,\cdots\,r_m)^t$ embeds $M_{n\times m}(k)$ linearly in $V$, and we have already seen that
the image contains at least one point of $V_1\times X_2$.
By Lemma~\ref{subspace}, every matrix of rank $n$ in $M_{n\times m}(k)$ maps to an element of $R^n$ which does not satisfy any non-commutative polynomial equation of degree $\le d$
and in particular does not lie in $V_1\times X_2$.  By Lemma~\ref{dim}, the intersection of $M_{n\times m}(k)$ with $V_1\times X_2$ has codimension $>c$.
However, in a finite dimensional vector space, a non-empty intersection of a subvariety of codimension $c$ with any irreducible subvariety is of codimension $\le c$ in the latter.

Thus $P$ is a polynomial identity of $R$.
\end{proof}

We remark that the analogue of Theorem~\ref{main} holds for non-unital associative algebras as well.  The proof is the same
except that polynomials cannot have a constant term.

There is also an analogue for Lie algebras.  Namely, given positive integers $c,d,n$, there exists a Lie polynomial $P$ such that for every Lie polynomial $Q$ of degree $d$ in $n$ variables and every Lie algebra $L$, either $P$ is an identity on $L$, or the variety defined by $Q$ on $L^n$ has codimension $>c$.  The proof is the same except that
$k\langle x_1,\ldots,x_n\rangle^{\le N}$ must be replaced by the subspace of Lie polynomials in $x_1,\ldots,x_n$ of degree $\le N$,
and the left hand side of (\ref{ineq}) must be replaced by the dimension of this subspace.  By \cite[II, \S3, Th\'eor\`eme~2]{Bourbaki}, it is
$$\sum_{M=1}^N \frac 1M \sum_{d\mid M} \mu(d) n^{M/d}
\ge \frac 1N\sum_{d\mid N} \mu(d) n^{N/d}
> \frac{n^N - \sum_{i\le N/2} n^i}N > \frac{n^N - 2n^{N/2}}N$$
It follows that for fixed $n$ and $N$ sufficiently large,
$$\frac 1N \sum_{d\mid N} \mu(d) n^{N/d} > \binom{n+N-1}N(n-1)^N.$$

The analogue of Theorem~\ref{main} holds also for Jordan algebras.  It suffices to show that for some $\epsilon < 1$ the
dimension of the space of Jordan polynomials in $x_1,\ldots,x_n$ of degree $\le N$ is
greater than
than $(n-\epsilon)^N$ for some $N$.  By \cite[Theorem~9]{Robbins}, if $c_{n,i}$ is the dimension of
the part of the free Jordan algebra on $n$ generators homogeneous of degree $i$, then
$$\sum_{i=0}^\infty c_{n,i}t^i = \exp \sum_{i=1}^\infty \frac{n^{\theta(i)} t^i}i,$$
where $\theta(i)$ denotes the largest odd divisor of $i$.  As $\sum_{i=1}^\infty \frac{n^{\theta(i)} t^i}i$
grows without bound as $t$ approaches $1/n$ from below, it follows that the radius of convergence of $\sum_{i=0}^\infty c_{n,i}t^i$
is at most $1/n$, from which we deduce that
$$\limsup \frac{\log c_{n,i}}i \ge n,$$
which implies the needed analogue of (\ref{ineq}).
\medskip

The proof of Theorem~\ref{matrix} follows the method used to bound the size of fibers of word maps for groups of finite simple groups of Lie type in \cite{LS1}, except
that since we are working over an algebraically closed field instead of a finite field, we use dimension theory as a substitute for the counting arguments in that paper.

\begin{proof}

Let $N$ denote the cardinality of $M^{\le d}$.
We order $M^{\le d}$ by increasing degree and within each degree, lexigraphically, denoting its elements,  in increasing order, $I_1, I_2, \cdots, I_N$.

For $\br = (r_1,\ldots,r_n)\in M_s(k)^n$, and $\bv = (v_1,\ldots,v_t)\in (k^s)^t$, we consider the sequence
\begin{equation}
\label{seq}
\br_{I_1} v_1,\ldots,\br_{I_N} v_1, \br_{I_1} v_2,\ldots,\br_{I_N} v_2,\ldots,\br_{I_N} v_t.
\end{equation}
If $e_Q(\br)=0$ for some non-zero $Q\in k\langle x_1,\ldots,x_m\rangle^{\le d}$,
then $e_Q(\br) v_i = 0$ for $1\le i\le t$, so for each $i$ in this range, there exists $j_i\le N$ such that
$r_{I_{j_i}}v_i$ is a linear combination of previous terms in the sequence (\ref{seq}).
Let $\Sigma_{n,d,t}$ denote the algebraic set consisting of pairs $(\br,\bv)\in M_s(k)^n\times (k^s)^t$
such that this condition holds.  We claim
\begin{equation}
\label{dim-est}
\dim \Sigma_{n,d,t} \le ns^2+Nt^2.
\end{equation}
Since $M_{s,Q}\times (k^s)^t\subset \Sigma_{n,d,t}$, this implies
$$\dim M_{s,Q} \le ns^2+Nt^2-st.$$
Setting $t = \lfloor s/2N\rfloor$, we obtain
$$\dim M_{s,Q} \le (n-1/4N)s^2+s,$$
which implies the theorem.

To prove (\ref{dim-est}), we consider a fixed $(\br,\bv)\in \Sigma_{n,d,t}$ and define $j_i$, for $1\le i\le t$,
to have the smallest value for which $r_{I_{j_i}}v_i$ is a linear combination of previous terms in the sequence (\ref{seq}).
It suffices to prove that for each $i$, if we fix all the terms preceding $r_{I_{j_i}}v_i$, the condition on $(\br,\bv)$ that
$r_{I_{j_i}}v_i$ lies in the span of these fixed vectors is a linear condition of codimension greater than $s-Nt$, and moreover, the conditions imposed
by successive values of $i$ are independent of one another.  If $j_i=1$, since $I_1$ is the tuple of length $0$,
the linear dependence condition requires $v_i$ to lie in the span of the $(i-1)N$ preceding terms of the sequence (\ref{seq}).
This is a condition of codimension greater than $s-Nt$, and as it is the only condition we consider which applies to $v_i$,
it is linearly independent of our other conditions.  If $j_i > 1$, then $x_{I_{j_i}}$ can be written $x_{p_i} x_{J_i}$ for some $1\le p_i\le n$
and some $J_i < I_{j_i}$.  The linear dependence condition can now be viewed as a condition on $r_{p_i}$, and it asserts that
$r_{p_i} (\br_{J_i} v_i)$ lies in a specified vector space of dimension less than $Nt$.  Note that $\br_{J_i} v_i$ belongs to the part of
the sequence (\ref{seq}) that we are assuming fixed, so this is a linear condition on $r_{p_i}$ of codimension $> s - Nt$.
Note also that by definition of $j_i$, $\br_{J_i} v_i$ is linearly independent from all previous terms in the sequence (\ref{seq}),
so the condition on $r_{p_i}$ is linearly independent of any previous conditions on $r_{p_i}$.

This concludes the proof of (\ref{dim-est}) and therefore of Theorem~\ref{matrix}.

\end{proof}

\end{document}